\documentclass[12pt]{amsart}
\usepackage{templatestyle}
\usepackage{templatecommand}
\usepackage{templatetensor}
\allowdisplaybreaks[3]
\title[Schwarz lemma for conical K\"ahler metrics]
{Schwarz lemma for conical K\"ahler metrics with general cone angles} 

\author
{Ryosuke Nomura}

\address{Graduate School of Mathematical Sciences, The University of Tokyo \endgraf
	3-8-1 Komaba, Meguro-ku, Tokyo, 153-8914, Japan.}

\email{nomu@ms.u-tokyo.ac.jp}

\thanks{Classification AMS 2010: 53C55, 
	32W20.
}

\keywords{Cone metrics, Schwarz lemma.
}

\begin{document}
	
\begin{abstract}
	The Schwarz--Pick lemma is a fundamental result in complex analysis. 
	It is well-known that Yau generalized it to the higher dimensional manifolds by applying his maximum principle for complete Riemannian manifolds. 
	Jeffres obtained Schwarz lemma for volume forms of conical K\"ahler metrics, based on a barrier function and the maximum principle argument. 
	In this note, we generalize Jeffres' result to general cone angles including the case when the pullback of the metric would blows up along the divisors.
\end{abstract}
\maketitle 
	
\section{Introduction}

The Schwarz--Pick lemma states that any holomorphic map between the unit disks in the complex plane decreases the Poincar\'e metrics. 
After that, Ahlfors \citep{Ahlfors39Schwarz} generalized it to a holomorphic map from the unit disk to a hyperbolic Riemann surface.
For higher dimensions, Yau \citep{Yau78Schwarz} showed that any holomorphic map from complete \kahler \ manifold whose Ricci curvature is bounded from below to a Hermitian manifold whose holomorphic bisectional curvature is bounded by a negative constant decreases the metric up to a multiplicative constant. Also, he showed that, under similar conditions on curvatures, any holomorphic map decreases the volume forms up to a multiplicative constant. 
Both results essentially based on his maximum principle for complete Riemannian manifolds. 
Later on, many generalizations obtained in various geometric settigs. 

In this note, we forcus on the conical K\"ahler metrics, for short, cone metrics. 
Let $X$ be a compact \kahler \ manifold of dimension $n$, $D$ be a smooth divisor on $X$, and $\beta $ be a real number satisfying $0< \beta<1$. The cone metric $\omega$ with cone angle $2\pi \beta $ along $D$ is a \kahler \ metric on $X\setminus D$ which is locally quasi-isometric to the standard cone metric
\begin{align*}
	\omega_\beta  \deq \dfrac{\beta ^2}{|z|^{2(1-\beta )}}\dfrac{\ii}{2} dz^1 \wedge d\zob  + \sum_{i=2}^n \dfrac{\ii}{2}  dz^i \wedge d\zib ,
\end{align*}
and satisfies some regularity conditions. (For a precise definition of the cone metric, see Definition \refs{ConeDef}.)
The notion of cone metrics plays an important role in recent advances in \kahler \ geometries, in particular \kahler -Einstein problems, for instance see [\citet{CDSI}, \citet{CDSII}, \citet{CDSIII}, \citet{Tian}].

To state the theorems, we use the following setups and notations. 
\begin{setups}
Let $X$ and $Y$ be compact \kahler \ manifolds, $D\subset X$, $E\subset Y$ be smooth divisors, and $f \colon X \rightarrow Y$ be a surjective holomorphic map satisfying $f^{\ast}(E) =k D$ with $k \in \zp$. 
Let $\omx$ (resp. $\omy$) be a cone metric with cone angle $2\pi \alpha $ (resp. $2\pi \beta$) along $D$ (resp. $E$) on $X$ (resp. $Y$). 
Let $s \in H^0(X, \mathcal{O}_X(D))$ be a holomorphic section of the line bundle $\mathcal{O}_X(D)$ whose zero divisor is $D$ and $h$ be a smooth Hermitian metric on it satisfying $|s|_h \le 1$. Let $C>0$ be  an upper bound for the Chern curvature of $h$ i.e.  $\ii R_h \le C \omx$. 
For a \kahler \ form $\omega$, we will denote by  
$\ric (\omega)$ the Ricci curvature of $\omega$, 
$R(\omega)$ the scalar curvature of $\omega$, 
and $\bisect(\omega)$ the bisectional curvature of $\omega$.
\end{setups}

Schwarz lemma for the cone metrics obtained by Jeffres \citep{Jeffres00} is states as follows. 
\begin{theorem}[{\citep[Theorem]{Jeffres00}}]\label{JeffresVolumeThm}
Assume that  $\dim X = \dim Y = n$, the cone angles satisfy $\alpha \le \beta $ and there exists non-negative constants $A, B\ge  0$ satisfying
\begin{align}\label{CurvVolineq}
	R(\omx) \ge -A, \ \ric(\omy) \le -B \omy<0 .
\end{align} 
Then, the volume forms satisfy
\begin{align*}
	\fomy^n \le \left( \dfrac{A}{B} \right)^n \omx^n \son X\setminus D.
\end{align*}
\end{theorem}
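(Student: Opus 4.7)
The plan is to adapt Yau's classical volume-form Schwarz lemma by running the Ricci-based maximum principle on the smooth locus $X\setminus D$, using a barrier built from the Hermitian norm $|s|_h$ to force the supremum of the relevant function to be attained away from the conical singularity.

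\textbf{Step 1 (Ricci identity for the quotient).} Set $u := \fomy^n / \omx^n$ on $X\setminus D$, and let $R_f\subset X$ denote the ramification divisor of $f$. On $X\setminus(D\cup R_f)$ both $\omx$ and $\fomy$ are smooth positive $(1,1)$-forms and $u$ is a smooth positive function. Since $f$ is holomorphic, $f^{\ast}\ric(\omy) = \ric(\fomy)$ there, and the identity $\ric(\omega)=-\ii\partial\bar\partial \log \omega^n$ gives
\begin{align*}
\ii\partial\bar\partial\log u \;=\; \ric(\omx) - f^{\ast}\ric(\omy).
\end{align*}
Tracing with $\omx$, using the hypotheses \eqref{CurvVolineq}, and applying the arithmetic--geometric mean inequality to the non-negative eigenvalues $\lambda_1,\dots,\lambda_n$ of $\fomy$ with respect to $\omx$ (whose product equals $u$), I obtain
\begin{align*}
\Delta_{\omx}\log u \;\ge\; R(\omx) + B\,\mathrm{tr}_{\omx}(\fomy) \;\ge\; -A + nB\,u^{1/n}.
\end{align*}

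\textbf{Step 2 (barrier function).} For $\epsilon > 0$ define $\varphi_\epsilon := \log u + \epsilon\log|s|_h^2$. The hypothesis $\alpha \le \beta$ combined with $f^{\ast}E = kD$ (so $k\beta \ge \alpha$) keeps $u$ bounded on $X\setminus D$ by a direct local computation in the standard cone coordinates. Hence, since $|s|_h \le 1$ and $|s|_h\to 0$ along $D$, the function $\varphi_\epsilon$ is upper semicontinuous on $X$, bounded above, and equals $-\infty$ on $D\cup R_f$. By compactness of $X$, its supremum is attained at some $p_0 \in X\setminus(D\cup R_f)$, where $\omx$ is a smooth K\"ahler metric.

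\textbf{Step 3 (maximum principle and conclusion).} Poincar\'e--Lelong for the section $s$ gives $\ii\partial\bar\partial\log|s|_h^2 = -\ii R_h$ on $X\setminus D$, and $\ii R_h\le C\omx$ yields $\Delta_{\omx}\log|s|_h^2 \ge -nC$. Applying the ordinary smooth maximum principle at $p_0$,
\begin{align*}
0 \;\ge\; \Delta_{\omx}\varphi_\epsilon(p_0) \;\ge\; -A + nB\,u(p_0)^{1/n} - \epsilon nC,
\end{align*}
so $u(p_0) \le ((A+\epsilon nC)/(nB))^n$. Combining $\varphi_\epsilon(x)\le\varphi_\epsilon(p_0)$ with $|s(p_0)|_h\le 1$, for every $x\in X\setminus D$,
\begin{align*}
u(x)\,|s(x)|_h^{2\epsilon} \;\le\; \Big(\frac{A+\epsilon nC}{nB}\Big)^{n}.
\end{align*}
Letting $\epsilon\downarrow 0$ on compact subsets of $X\setminus D$ and exhausting yields $u\le (A/(nB))^n \le (A/B)^n$, which is the claim.

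\textbf{Main obstacle.} The delicate point is Step 2: ensuring the sup of $\varphi_\epsilon$ lies in the smooth locus so that the classical pointwise maximum principle applies. Because $D$ is at finite distance in $\omx$, Yau's complete-manifold Omori--Yau maximum principle is unavailable, and the barrier $\epsilon\log|s|_h^2$ plays exactly the role of compensating for this incompleteness. The assumption $\alpha\le\beta$ is what keeps $u$ bounded near $D$, so that an arbitrarily small $\epsilon$ already forces $\varphi_\epsilon\to -\infty$ on $D$. For cone angles in which $u$ blows up along $D$ (the regime targeted later in the paper), a more delicate balance between $\epsilon$ and the blow-up rate of $u$ will be required, and that is the main refinement this note develops.
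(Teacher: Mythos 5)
Your argument is correct, and it even lands on the sharper constant $(A/(nB))^{n}$ of Theorem \ref{VolThm}(a), of which the stated theorem is a consequence. The skeleton --- the Chern--Lu/Yau inequality $\Delta_{\omega_X}\log u \ge nB\,u^{1/n}-A$ followed by a barrier that forces the maximum off $D$ --- is the same as the paper's, but your barrier is genuinely different. The paper proves part (a) via Jeffres' trick (Proposition \ref{JeffresTrick}): it perturbs $v$ additively to $v+\varepsilon|s|_h^{2\gamma}$ and rules out a maximum on $D$ by the H\"older continuity of $v$ with respect to $d_\beta$; this is precisely the step that requires the regularity condition (iii) of Definition \ref{ConeDef} (the $C^{2,\alpha,\beta}$ potential), a point the paper itself flags as a gap in Jeffres' original argument. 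You instead add $\epsilon\log|s|_h^{2}$ to $\log u$, so the perturbed function tends to $-\infty$ on $D\cup R_f$ and its maximum automatically lies in the smooth locus; for this you only need $u$ bounded above near $D$, which follows from the quasi-isometry condition (ii) together with the local normal form $w^1\circ f=(z^1)^k\cdot(\text{unit})$ and $\alpha\le k\beta$. In effect you are running, at $\ell=0$, the multiplicative barrier $|s|_h^{2(\ell+\varepsilon)}v$ that the paper reserves for part (b); what this buys is independence from the H\"older regularity hypothesis (iii), at essentially no cost. Two minor points: the identity $f^{\ast}\mathrm{Ric}(\omega_Y)=\mathrm{Ric}(f^{\ast}\omega_Y)$ holds only off the ramification locus, as you correctly arrange, and the final limit $\epsilon\downarrow 0$ is a pointwise statement at each fixed $x\in X\setminus D$ (where $|s(x)|_h^{2\epsilon}\to 1$), so no exhaustion by compact subsets is needed.
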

Since the cone metric is not complete on $X \setminus D$, we cannot apply the maximum principle argument directly. Jeffers overcame this difficulty by using a barrier function, called ``Jeffres' trick''.  
However, his original proof seems to need more assumptions on the regularity of the cone metrics along $D$ as in Definition \refs{Donaldson}  (see the proof of Proposition \ref{JeffresTrick}).

In this note, we will generalize this theorem to a general cone angle and prove a Schwarz lemma for cone metrics.

\begin{theorem}[Volume forms]\label{VolThm}
Assume that $\dim X = \dim Y =n$ and the curvature condition (\refs{CurvVolineq}) holds. 
\begin{enua}
	\item Suppose $\alpha \le k\beta $. Then we have 
	\begin{align*}
		\fomy^n \le \left( \dfrac{A}{nB} \right)^n \omx^n \son X\setminus D.
	\end{align*}
	\item Suppose $\alpha > k\beta $. Then we have 
			\begin{align*}
					\fomy^n \le \left( \dfrac{A+ (\alpha - k\beta ) C}{nB} \right)^n \dfrac{\omx^n}{|s|_h ^{2(\alpha -k\beta)}} \son X\setminus D.
			\end{align*}
\end{enua}
\end{theorem}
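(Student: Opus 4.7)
\emph{Setup and differential inequality.} Set $u := \fomy^n / \omx^n$ on $X\setminus D$. A local model computation at a point of $D$ shows that $u \sim |s|_h^{-2(\alpha - k\beta)}$: the cone singularity of $\omx$ contributes $|s|_h^{-2(1-\alpha)}$ to the volume, while $\fomy^n$ contributes $|s|_h^{-2k(1-\beta)}$ from the cone factor of $\omy$ along $E$ together with $|s|_h^{2(k-1)}$ from the order $(k-1)$ vanishing of the Jacobian of $f$ along $D$. Consequently $u$ is bounded above in case (i), while $|s|_h^{2(\alpha-k\beta)}u$ is bounded above in case (ii). Off $D$ and the branch locus of $f$, the Ricci identity $\ii\partial\bar{\partial}\log\omega^n=-\ric(\omega)$ yields
\begin{align*}
\ii \partial \bar{\partial} \log u = \ric(\omx) - f^{\ast}\ric(\omy) .
\end{align*}
Tracing with $\omx$, and then using $R(\omx)\ge -A$, $f^{\ast}\ric(\omy)\le -B f^{\ast}\omy$, and the arithmetic--geometric mean inequality on the eigenvalues of $f^{\ast}\omy$ relative to $\omx$, one obtains
\begin{align*}
\mathrm{tr}_{\omx} \bigl( \ii \partial \bar{\partial} \log u \bigr) \ge -A + nB\, u^{1/n} .
\end{align*}

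\emph{Jeffres barrier and maximum principle.} Set $\mu := \epsilon$ in case (i) and $\mu := (\alpha - k\beta) + \epsilon$ in case (ii), and consider
\begin{align*}
\phi_{\epsilon} := \log u + \mu \log |s|_h^2 .
\end{align*}
Since $\ii \partial \bar{\partial} \log |s|_h^2 = -\ii R_h$ and $\ii R_h \le C\,\omx$, this upgrades to
\begin{align*}
\mathrm{tr}_{\omx} \bigl( \ii \partial \bar{\partial} \phi_{\epsilon} \bigr) \ge -A + nB\, u^{1/n} - \mu C .
\end{align*}
The asymptotics of $u$ together with $\mu>0$ force $\phi_\epsilon\to -\infty$ along $D$, so $\phi_\epsilon$ attains its supremum at some interior point $p_0\in X\setminus D$, necessarily off the branch locus of $f$. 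Proposition \ref{JeffresTrick} legitimates the maximum principle in the conical setting and gives $\mathrm{tr}_{\omx}(\ii\partial\bar{\partial}\phi_\epsilon)(p_0)\le 0$, whence $u(p_0)^{1/n}\le (A+\mu C)/(nB)$. Since $|s|_h\le 1$, one has $\phi_\epsilon(p)\le \phi_\epsilon(p_0)\le \log u(p_0)$ for every $p\in X\setminus D$; exponentiating and sending $\epsilon\to 0$ recovers the desired pointwise bounds in both cases, the factor $|s|_h^{-2(\alpha-k\beta)}$ in case (ii) arising from the $(\alpha-k\beta)\log|s|_h^2$ piece of $\phi_\epsilon$.

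\emph{Main obstacle.} The critical difficulty is the incompleteness of $\omx$ on $X\setminus D$: Yau's complete-manifold maximum principle does not apply directly to $\log u$. The Jeffres barrier $\mu \log|s|_h^2$ is designed precisely to force the supremum of $\phi_\epsilon$ into the interior of $X\setminus D$, but the Hopf maximum principle must still be justified at $p_0$ in spite of the conical geometry nearby; that justification is the content of Proposition \ref{JeffresTrick}, which requires finer regularity of the cone metric along $D$ (in the sense of Definition \ref{Donaldson}) than mere quasi-isometric equivalence to the standard model.
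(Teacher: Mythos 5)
Your argument is correct, but for part (a) it takes a genuinely different route from the paper. The paper proves (a) by Jeffres' trick in its original form: it first argues that $v=f^{\ast}\omega_Y^n/\omega_X^n$ is H\"older continuous with respect to the conical distance (this is where the $C^{2,\alpha,\beta}$ regularity of Definition \ref{Donaldson} enters), then adds the power barrier $\varepsilon|s|_h^{2\gamma}$ to $v$ itself and uses Proposition \ref{JeffresTrick} to push the maximum off $D$. You instead add the logarithmic barrier $\mu\log|s|_h^2$ to $\log v$ in both cases, so the perturbed function tends to $-\infty$ along $D$ and its maximum is forced into $X\setminus D$ as soon as $v$ (case (a)) or $|s|_h^{2(\alpha-k\beta)}v$ (case (b)) is bounded above near $D$ --- a fact that follows from the quasi-isometry condition alone. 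This is precisely the mechanism the paper uses for part (b); your proof unifies the two cases and, for (a), replaces the H\"older-continuity input by the weaker boundedness input. What it buys is that Proposition \ref{JeffresTrick}, and hence the extra regularity hypothesis behind it, is not actually needed; correspondingly, your attribution of the maximum principle's validity to that proposition is a mislabel --- once $p_0$ lies in $X\setminus D$ and off the branch locus, everything is smooth near $p_0$ and the ordinary second-derivative test suffices, with no further input from the conical geometry. Two small inaccuracies: the local model yields only a one-sided bound $v\le \mathrm{const}\cdot|s|_h^{-2(\alpha-k\beta)}$ (the Jacobian may vanish to order higher than $k-1$ along $D$ and vanishes on the branch locus), not the two-sided asymptotic ``$\sim$'' you assert, though the upper bound is all you use; and tracing $\ii R_h\le C\,\omega_X$ against $\omega_X$ gives $\mathrm{tr}_{\omega_X}(\ii R_h)\le nC$, so your estimate should read $-\mu nC$ rather than $-\mu C$ (the paper's proof of (b) contains the same slip, so the theorem's constant should be understood as a bound on the trace of the Chern curvature).
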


We remark that the condition $\alpha \le k\beta $ on cone angles in the statement (a) is weaker than assumptions in Theorem \refs{JeffresVolumeThm}.
\begin{theorem}[Metrics]\label{TraceThm}
Assume that there exists non-negative constants $A, B\ge  0$ such that the curvatures satisfy the following: 
\begin{align}\label{CurvTrineq}
	\ric(\omx) \ge -A \omx, \ \bisect (\omy) \le -B <0 .
\end{align}
\begin{enua}
	\item Suppose $\alpha \le k\beta $. Then we have 
	\begin{align*}
		\fomy \le \dfrac{A}{B}\omx \son X\setminus D.
	\end{align*}
	\item Suppose $\alpha > k\beta $. Then we have 
		\begin{align*}
				\fomy \le  \dfrac{A+ (\alpha - k\beta ) C}{B}  \dfrac{\omx}{|s|_h ^{2(\alpha -k\beta)}} \son X\setminus D.
		\end{align*}
\end{enua}
\end{theorem}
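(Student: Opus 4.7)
The plan is to follow Yau's Schwarz lemma strategy combined with Jeffres' barrier trick to circumvent the incompleteness of $\omx$ on $X\setminus D$. Set $u \deq \operatorname{tr}_{\omx}(\fomy)$. On $X\setminus D$ both $\omx$ and $\fomy$ are smooth K\"ahler forms, so the classical Chern--Lu inequality gives, under the hypotheses \refs{CurvTrineq},
\begin{align*}
	\Delta_{\omx} \log u \ge -A + B u \son X\setminus D.
\end{align*}
If $\omx$ were complete on $X\setminus D$ one could invoke Yau's maximum principle and conclude $u\le A/B$ at once; the work of the theorem is in handling the fact that it is not.

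Next I prepare the barrier from the defining section $s$. On $X\setminus D$ one has $\ii\partial\bar\partial \log |s|_h^2 = -\ii R_h$, so the hypothesis $\ii R_h \le C \omx$ yields $\Delta_{\omx}\log|s|_h^2 \ge -C$ in the trace normalization implicit in the paper. For case (a), when $\alpha \le k\beta$, the quasi-isometry of $\omx,\omy$ to the standard cones together with $f^{\ast}(E)=kD$ shows that $\fomy$ has cone behaviour along $D$ no worse than that of $\omx$, so $u$ is bounded on $X\setminus D$. For small $\epsilon>0$ set
\begin{align*}
	F_\epsilon \deq \log u + \epsilon \log |s|_h^2.
\end{align*}
Since $F_\epsilon\to-\infty$ near $D$ while $u$ stays bounded, $F_\epsilon$ attains its supremum at some $p_0\in X\setminus D$. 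The maximum principle at $p_0$, combined with the two Laplacian inequalities above, gives $B u(p_0)\le A+\epsilon C$, hence $F_\epsilon(p_0)\le \log\bigl((A+\epsilon C)/B\bigr)$ using $|s|_h\le 1$. Propagating this upper bound to all of $X\setminus D$ and letting $\epsilon\to 0^+$ yields $u\le A/B$, and therefore $\fomy\le (A/B)\omx$.

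For case (b), when $\alpha>k\beta$, the trace $u$ is no longer bounded along $D$, but the same local cone model shows that $|s|_h^{2(\alpha-k\beta)}u$ is. I absorb this singular scaling into the barrier by setting
\begin{align*}
	F_\epsilon \deq \log u + (\alpha-k\beta+\epsilon)\log |s|_h^2
\end{align*}
and rerun the maximum-principle argument. The Laplacian of $\log|s|_h^2$ now enters with coefficient $\alpha-k\beta+\epsilon$, producing $Bu(p_0)\le A+(\alpha-k\beta+\epsilon)C$, and letting $\epsilon\to 0^+$ gives the estimate claimed in (b).

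The principal obstacle is the boundedness step in each case: one must show, using the local cone structure and the ramified form of $f$ along $D$, that $u$ (respectively $|s|_h^{2(\alpha-k\beta)}u$) is bounded near $D$, so that $F_\epsilon$ indeed attains its supremum inside the open manifold $X\setminus D$. This is essentially the content of Proposition \refs{JeffresTrick} alluded to in the introduction, and it is where the detailed regularity hypotheses on cone metrics (Definition \refs{ConeDef}) enter. A minor secondary point is that the maximum principle at $p_0$ is genuinely justified because $u$ and $\omx$ are smooth on $X\setminus D$.
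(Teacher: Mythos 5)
Your argument is correct, and for part (b) it coincides with what the paper actually does (the paper maximizes $|s|_h^{2(\ell+\varepsilon)}u$ with $\ell=\alpha-k\beta$, which is exactly your $F_\epsilon$ after exponentiating). For part (a), however, you take a genuinely different route from the paper. The paper's proof of (a) uses the \emph{additive} barrier of Proposition \refs{JeffresTrick}: it maximizes $u+\varepsilon|s|_h^{2\gamma}$ with $0<2\gamma<\sigma\beta$, and rules out a maximum on $D$ by showing $u$ is H\"older continuous with respect to the cone distance $d_\beta$, so that $|s|_h^{2\gamma}$ decays too slowly for the sum to peak on $D$. You instead use the \emph{multiplicative} barrier $\epsilon\log|s|_h^2$, which forces $F_\epsilon\to-\infty$ along $D$ and only requires $u=\tromx{\fomy}$ to be \emph{bounded} near $D$ — a strictly weaker input than H\"older continuity, obtained from the local model $f=((z^1)^k,f_2,\dots,f_n)$ and the quasi-isometry to $\omega_\alpha$, $\omega_\beta$. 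This buys you two things: a uniform treatment of (a) and (b) as the cases $\ell=0$ and $\ell>0$ of one argument, and independence from the $C^{2,\alpha,\beta}$ regularity hypothesis (condition (iii) of Definition \refs{ConeDef}) that the paper needs precisely to run Jeffres' trick. The trade-off is that you must still verify the boundedness of $u$ (resp.\ of $|s|_h^{2(\alpha-k\beta)}u$) from the cone model, which you correctly flag as the remaining step but misattribute: this is \emph{not} the content of Proposition \refs{JeffresTrick}, which concerns H\"older regularity and the additive barrier, not boundedness; the boundedness is the elementary computation with $f^\ast\omega_\beta\sim\beta^2k^2|z^1|^{2(k\beta-1)}\,\ii\,dz^1\wedge d\overline{z^1}/2+\cdots$ sketched at the end of the introduction. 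Two further minor points: $\lapomx\log|s|_h^2=-\tromx{\ii R_h}\ge-nC$ rather than $-C$ under the stated normalization $\ii R_h\le C\omx$ (the paper is equally cavalier here, and the factor disappears in the limit $\epsilon\to0$ in case (a) but would rescale $C$ in case (b)); and one should note that $F_\epsilon=-\infty$ on the analytic set where $df=0$, which is harmless since the supremum is still attained at a point with $u>0$ where everything is smooth.
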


If the cone angle satisfies $\alpha > k \beta$, the pullback $\fomy$ has singularites along $D$. 
In fact, even in a one-dimensional case, the pullback of the standard cone metric $\omega_\beta = (\beta^2 /|w|^{2(1-\beta )})\ii dw\wedge d\overline{w}/2$ by $f: z\mapsto w=z^k$ is given by
\begin{align*}
	f^\ast \omega_\beta = \beta^2 k^2 |z|^{2(k\beta-1)}\dfrac{\ii }{2} dz \wedge d\overline{z},
\end{align*}
therefore we have
\begin{align*}
	\dfrac{f^\ast \omega_\beta}{\omega_\alpha }= \dfrac{\beta^2 k^2}{\alpha^2 }|z|^{2(k\beta-\alpha )},
\end{align*}
which is singular if $\alpha > k \beta$. 

\vspace{10pt}

\noindent 
\textbf{Acknowledgment. }The author would like to thank his supervisor Prof. Shigeharu Takayama for various comments.
This work is supported by the Program for Leading Graduate Schools, MEXT, Japan.

\section{Cone metrics}

In this section, we recall the definition of cone metrics following \citep[Section 4]{Donaldson12KametricConesing}. Let $X$ be a compact \kahler \ manifold of dimension $n$, $D$ be a smooth divisor on $X$, and $\beta $ be a real number satisfying $0< \beta<1$. 
We first remark that if we take a local holomorphic chart $(U, (z^1, \dots, z^n))$ satisfying $D\cap U =\{ z^1 =0 \}$, 
the standard cone metric $\omega_\beta $ induces a distance function $d_\beta $ on $U$ which is expressed as 
\begin{align*}
	d_\beta (z,w) = \left( 
	\left|  (z^1)^\beta - (w^1)^\beta \right|^2
	+ | z^2 - w^2 |^2 + \cdots + | z^n - w^n |^2 
 \right)^{1/2},
\end{align*}
where $z=(z^1,\dots, z^n), w=(w^1, \dots, w^n)$. Here, we take a suitable branch of $z^\beta$.

\begin{definition}[$C^{2, \alpha ,\beta}$-functions]\label{Donaldson}
Let $\alpha $ be a constant satiftying $0< \alpha < \min\{ 1/\beta -1, 1  \}$. We define the regularites of functions along $D$ as follows. 
\begin{enumerate}
	\item A function $f$ on $X$ is said to be {\it of class $C^{,\alpha ,\beta}$} if for any local holomorphic chart $(U, (z^1, \dots, z^n))$ satisfying $D\cap U =\{ z^1 =0 \}$, 
	$f$ is  an $\alpha$-\holder \ function on $U$ with respect to the distance function $d_\beta $.
	
	This definition is equivalent to the following statement which is the original definition in \citep{Donaldson12KametricConesing}. We set $\widetilde{f}$ by $\widetilde{f}(\xi , z^2, \dots, z^n) \deq f ( |\xi |^{1/\beta -1}\, \xi ,$ $z^2, \dots, z^n)$. Then $\widetilde{f}$ is an $\alpha$-\holder \ function with respect to $\xi , z^2, \dots, z^n$ with respect to the Euclidean distance.
	\item A $(1, 0)$-form $\tau $ is said to be {\it of class $C^{,\alpha ,\beta}$} if 
				\begin{align*}
					\left| z^{1} \right| ^{1-\beta} \tau \left(  \delzo  \right) & \in C^{,\alpha ,\beta}, \\
					\tau \left( \delzi   \right) &  \in C^{,\alpha ,\beta} \sfor i=2, \dots, n
				\end{align*}
	\item A $(1,1)$-form $\sigma $ is said to be {\it of class $C^{,\alpha ,\beta}$} if 
				\begin{align*}
					\left| z^{1} \right| ^{2(1-\beta)} \sigma \left(  \delzo,  \delzob   \right) &  \in C^{,\alpha ,\beta}, \\
					\left| z^{1} \right| ^{1-\beta} \sigma \left( \delzo,  \delzib  \right) &  \in C^{,\alpha ,\beta} \sfor i=2, \dots, n, \\
					\left| z^{1} \right| ^{1-\beta} \sigma \left( \delzi,  \delzob  \right) &  \in C^{,\alpha ,\beta} \sfor i=2, \dots, n, \\
					\sigma \left( \delzi,  \delzjb  \right) &  \in C^{,\alpha ,\beta} \sfor i,j=2, \dots, n.
				\end{align*}
	\item A function $f$ is said to be {\it of class $C^{2,\alpha ,\beta}$} if $f$, $\partial f$, $\dbar f$, $\ddb f$ are of class $C^{,\alpha ,\beta}$.
\end{enumerate}
\end{definition}
\vspace{10pt}

\begin{definition}[Cone metrics]\label{ConeDef}
A closed positive $(1,1)$-current $\omega$ on $X$ is called a {\it cone metric with cone angle $2\pi \beta $ along $D$} if it satisfies the following three conditions:
\begin{enui}
	\item $\omega$ is a \kahler \ metric on $X\setminus D$
	\item For each point $x \in D$, there exists a local holomorphic chart $(U, (z^1, \dots, z^n))$ satisfying $D\cap U =\{ z^1 =0 \}$ such that $\omega $ is quasi-isometric to $\omega_\beta$ on $U\setminus D$, that is, there exists a constant $C= C_U >0$ such that
		\begin{align*}
			\dfrac{1}{C} \omega_\beta \le \omega \le C \omega_\beta \son U \setminus D.
		\end{align*}
	Here, $\omega_\beta $ is the standard cone metric defined by 
					\begin{align*}
						\omega_\beta  \deq \dfrac{\beta ^2}{|z|^{2(1-\beta )}}\dfrac{\ii}{2} dz^1 \wedge d\zob  + \sum_{i=2}^n \dfrac{\ii}{2}  dz^i \wedge d\zib.			
					\end{align*}
	\item There exists a smooth \kahler \ form $\omega_0$ on $X$, and a $ C^{2,\alpha ,\beta}$-function $ \varphi $ such that 
			\begin{align*}
					\omega = \omega_0 + \ddb \varphi .
				\end{align*}	
\end{enui}
\end{definition}
In \citep{Jeffres00}, the regularity condition (iii) does not assumed. However, we assume here.

A typical example of the cone metric is $\omega \deq \omega _0 + \delta \ddb |s|_h^{\beta}$, where $\omega_0$ is a smooth \kahler \ metric on $X$, $\delta $ is a sufficiently small constant, $s \in H^0(X, \mathcal{O}_X(D))$ is a holomorphic section of the line bundle $\mathcal{O}_X(D)$ whose zero divisor is $D$, and $h$ is a smooth Hermitian metric. 

\section{Proof of the theorems}

To prove the theorem, we need the following Laplacian estimates which are obtained by [\citet{Chern68}, \citet{Lu68}]. For the readers convenience, we prove here.

\begin{proposition}\label{LapProp}
Let $X,Y$ be (not necessarily compact) \kahler \ manifolds, and $f \colon X \rightarrow Y$ be a holomorphic map. Let $\omx$ (resp. $\omy$) be a smooth \kahler \ metric on $X$ (resp. $Y$). 
We set $v \deq \fomy^n/\omx^n$, and $u \deq \tromx{\fomy}$.
\begin{enua}
	\item Suppose that there exists non-negative constants $A, B\ge  0$ satisfying
$
		R(\omx) \ge -A, \ \ric(\omy) \le -B \omy, 
$
	and $\dim X = \dim Y = n$. Then we have
		\begin{align*}
			\lapomx \log v 
				&\ge nB  v^{1/n} - A, \\ 
			\lapomx v 
				&\ge v ( nB  v^{1/n} - A  ).
		\end{align*}
	\item Suppose that there exists non-negative constants $A, B\ge  0$ satisfying $\ric (\omx) \ge -A\omx$, $\bisect ( \omy ) \le -B \omy$.
		Then we have
		\begin{align*}
			\lapomx \log  u 
				&\ge Bu-A, \\
			\lapomx u 
				&\ge u (Bu-A ).
		\end{align*}		
\end{enua}
\end{proposition}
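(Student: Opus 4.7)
For part (a), the plan is to reduce $\lapomx \log v$ to a computation involving the Ricci forms of the two metrics. On the Zariski open set $X_0 = \{\det Df \ne 0\}$, one has $\log v = \log \det(f^{\ast} g_Y)_{i\bar{j}} - \log \det(g_X)_{i\bar{j}}$, and since $f$ is holomorphic the first summand splits as $\log|\det Df|^2 + \log\det(g_Y)\circ f$ with the first piece pluriharmonic. Consequently
\begin{align*}
\ii\ddb \log v = -f^{\ast}\ric(\omy) + \ric(\omx) \quad \text{on } X_0.
\end{align*}
Tracing with $\omx$ and invoking $\ric(\omy)\le -B\omy$ and $R(\omx)\ge -A$ gives $\lapomx \log v \ge Bu - A$ where $u = \tromx{\fomy}$. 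The AM--GM inequality applied to the eigenvalues $\lambda_1,\dots,\lambda_n$ of $\fomy$ with respect to $\omx$ then gives $u = \sum_i\lambda_i \ge n\bigl(\prod_i\lambda_i\bigr)^{1/n} = nv^{1/n}$, yielding the first displayed inequality. The second follows from the elementary identity $\lapomx v = v\lapomx \log v + v|\nabla \log v|^2_{\omx} \ge v\lapomx \log v$.

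For part (b), the strategy is the classical Chern--Lu Bochner computation. Fix $p\in X$ with $u(p)>0$, choose complex normal coordinates $(z^i)$ at $p$ and $(w^\alpha)$ at $f(p)$, and, after a further unitary change, simultaneously diagonalize the Jacobian as $f^\alpha_i(p) = \lambda_i \delta^\alpha_i$, so $u(p) = \sum_i \lambda_i^2$. Applying $\lapomx$ to $u = g_X^{i\bar{j}} h_{\alpha\bar{\beta}}(f)\, f^\alpha_i\bar{f}^\beta_{\bar{j}}$, the second derivatives of $g_X^{-1}$ produce a Ricci-of-$\omx$ contribution, those of $h\circ f$ produce the full Riemann tensor of $\omy$ pulled back along $f$, and holomorphicity (so $f^\alpha_{i\bar{k}} = 0$) yields a non-negative square $|\nabla df|^2$. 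One obtains
\begin{align*}
\lapomx u(p) = |\nabla df|^2 + \sum_i \ric(\omx)_{i\bar{i}}\,\lambda_i^2 - \sum_{i,k} R^Y_{i\bar{i}k\bar{k}}\,\lambda_i^2\lambda_k^2.
\end{align*}
Applying the hypotheses gives the Ricci sum $\ge -Au$ and, since $-R^Y_{i\bar{i}k\bar{k}} \ge B$, the bisectional sum $\ge B(\sum_i\lambda_i^2)^2 = Bu^2$; altogether $\lapomx u \ge |\nabla df|^2 + Bu^2 - Au$.

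The final step is the Chern--Lu Cauchy--Schwarz trick. Holomorphicity also implies $\partial_k u = \sum_{i,\alpha} f^\alpha_{ik}\bar{f}^\alpha_{\bar{i}}$ at $p$, so
\begin{align*}
|\partial_k u|^2 \le \Bigl(\sum_{i,\alpha}|f^\alpha_{ik}|^2\Bigr)\Bigl(\sum_{i,\alpha}|f^\alpha_i|^2\Bigr),
\end{align*}
and summing over $k$ yields $|\nabla u|^2_{\omx}\le u\,|\nabla df|^2$. Substituting into the identity $u\,\lapomx \log u = \lapomx u - |\nabla u|^2_{\omx}/u$ absorbs the $|\nabla df|^2$ contribution and gives $\lapomx\log u \ge Bu - A$; the second inequality then follows from $\lapomx u \ge u\lapomx\log u$.

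I expect the main obstacle to be the bookkeeping in the Bochner computation of $\lapomx u$ in part (b): correctly extracting the Ricci contribution of $\omx$, the full curvature tensor of $\omy$, and the positive square $|\nabla df|^2$ requires care with commuting covariant derivatives, even in normal coordinates. The AM--GM step in (a) and the Cauchy--Schwarz absorption in (b) are both short once the Bochner identity is in hand, and the extension of all inequalities across the analytic set $\{\det Df = 0\}$ is routine by continuity.
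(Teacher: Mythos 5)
Your proposal is correct and follows essentially the same route as the paper: in (a) the Ricci identity $\ii\ddb\log v = -f^{\ast}\ric(\omega_Y)+\ric(\omega_X)$ plus AM--GM, and in (b) the Chern--Lu Bochner computation with the Cauchy--Schwarz absorption of $|\nabla u|^2_{\omega_X}$ by $u\,|\nabla df|^2$. The only cosmetic difference is that you diagonalize the Jacobian at the point, whereas the paper keeps the computation in general normal coordinates.
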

\vspace{10pt}
\begin{proof}
Let $(z^1, \dots, z^n)$ and $(w^1, \dots, w^n)$ be normal coodinates on $X$ and $Y$ respectively. We set
\begin{align*}
	\omx = \ii \gijb \dzidzjb , \ \omy = \ii \habb \dwadwbb.
\end{align*}
\noindent
(a) 
$v$ is locally denoted as 
\begin{align}\label{vollocal}
	v = \dfrac{\fomy^n}{\omx^n} = \dfrac{\det(\habb \circ f )\,  |\det J(f)|^2}{\det (\gijb )} 
\end{align}
where $J(f)$ is the  Jacobian of $f$. Therefore, on $\Omega\deq \{ x\in X \mid \det J(f)(x)\neq 0  \}$, we obtain
\begin{align*}
	\ddb \log v
		&=  f^\ast \ddb \log \det(\habb) + \ddb \log \det (\gijb ) - \ddb \log |\det J(f)|^2 \\[5pt]
		&= f^\ast (- \ric (\omy ) ) + \ric (\omx ) .
\end{align*}
By the assumption on curvatures and the inequality of arithmetic and geometric means, we have the following estimates on $\Omega$: 
\begin{align*}
	\lapomx \log v 
		&= \tromx{ \ddb \log v  } =\tromx{ f^\ast (- \ric (\omy ) )} + R (\omx ) \\
		&\ge B \tromx{f^\ast \omy } - A \\[5pt]
		&\ge nB  v^{1/n} - A, \\[5pt]
	\lapomx v
		&= \lapomx e^{\log v} = e^{\log v} \left( |\nabla \log v|_{\omx}^2 + \lapomx \log v \right)\\[5pt]
		&\ge v \lapomx \log v \\[5pt]
		&\ge v (nB v^{1/n} - A ) .
\end{align*}
 By continuity, the last inequality holds on the whole $X$.  \\

\noindent 
(b) We set
\begin{align*}
	f^\ast \omy = \ii \hijb \dzidzjb \deq  \ii (\habb\circ f )  (\deli f^\alpha )  \overline{(\delj f^\beta)}\dzidzjb,
\end{align*}
and denote $?R_i \jbar k \lbar ?$ and $?S_\alpha \bbar \gamma \dbars ? $ by the curvature tensor of $\omx$ and $\omy$ respectively.
Then we have the following inequalities, which are our assertion (b).
\begin{align*}
	&\lapomx \tromx{\fomy }
		= \guklb \delk \dellb \left( \guijb \hijb \right)
		=\guklb \left( \delk \dell \guijb  \right)\hijb
				+ \guklb \guijb \left(   \delk \dell \hijb  \right)\\[2pt]
		&=\guklb ?R_k \lbar ^i\jbar ? \hijb 
				+\Bigl(  
						\guklb \guijb (\deli \delk f^\alpha ) \overline{(\delj \dell f^\beta)}
		- \guklb \guijb (\deli f^\alpha )  \overline{(\delj f^\beta)} (\delk f^\gamma )  \overline{(\dell f^\delta )} ? S_ \alpha \bbar \gamma \dbars? 
				 \Bigr)\\[2pt]
		&\ge \angomx{\ric(\omega), \fomy}-  (\deli f^\alpha )  \overline{(\deli f^\beta)} (\delk f^\gamma )  \overline{(\delk f^\delta )} ?S_ \alpha \bbar \gamma \dbars? \\[2pt]
		&\ge -Au + B u^2 = u (Bu -A ) .\\[2pt]
	&\lapomx \log \tromx{\fomy}
		= \dfrac{\lapomx \tromx{ \fomy }}{ \tromx{\fomy} } -\dfrac{ \absom{\nabla \tromx{\fomy}}^2 }{(\tromx{\fomy})^2}\\[2pt]
		&=\dfrac{1}{ \tromx{\fomy} }
				\Bigl( 
					\angomx{\ric(\omx), \fomy}
					- \guklb \guijb (\deli f^\alpha )  \overline{(\delj f^\beta)} (\delk f^\gamma )  \overline{(\dell f^\delta )} ? S_ \alpha \bbar \gamma \dbars? 
					\\[2pt]
			&\quad+ \guklb \guijb (\deli \delk f^\alpha ) \overline{(\delj \dell f^\beta)}
							\Bigr) 	
						 -\dfrac{ \absomx{\nabla \tromx{\fomy}}^2 }{(\tromx{\fomy})^2}	\\[2pt]
		&=\dfrac{1}{ \tromx{\fomy} }
				\left( 
					\angomx{\ric(\omx), \fomy}
					- \guklb \guijb (\deli f^\alpha )  \overline{(\delj f^\beta)} (\delk f^\gamma )  \overline{(\dell f^\delta )} ? S_ \alpha \bbar \gamma \dbars? 
			\right) \\[2pt]
		&\quad +\dfrac{1}{(\tromx{\fomy})^2}	
				\left(
						\tromx{\fomy} \guklb \guijb (\deli \delk f^\alpha ) \overline{(\delj \dell f^\beta)}
						-\absomx{\nabla \tromx{\fomy}}^2
				\right)\\[2pt]
		&\ge\dfrac{1}{ \tromx{\fomy} }
				\left( 
					\angomx{\ric(\omx), \fomy}
					- \guklb \guijb (\deli f^\alpha )  \overline{(\delj f^\beta)} (\delk f^\gamma )  \overline{(\dell f^\delta )} ? S_ \alpha \bbar \gamma \dbars? 
				\right) \\[2pt]
		&\ge Bu -A.
\end{align*}
In the second line from the bottom, we used the following inequality:
\begin{align*}
\absomx{\nabla \tromx{\fomy}}^2
	&= \guijb ( \deli \guklb \hklb )(\deljb \gupqb \hpqb ) = \guijb\guklb  \gupqb ( \deli  \hklb   )(\deljb  \hpqb )\\[5pt]
	&=\sum_{i,k,p, \alpha ,\beta }  (\deli \delk f^\alpha )\overline{(\delk f^\alpha )}  \overline{(\deli \delp f^\beta ) }  (\delp f^\beta )\\
	&\le \sum_{k,p, \alpha ,\beta } 
		\left(
			| \delp f^\beta | | \delk f^\alpha | 
			 \left( \sum_i |\deli \delk f^\alpha |^2  \right)^{1/2} 
			 \left( \sum_j |\delj \delp f^\beta |^2 \right)^{1/2} 
			 	\right) \\
	&=\left( \sum_{k, \alpha }  | \delk f^\beta | \left( \sum_i |(\deli \delk f^\alpha |^2  \right)^{1/2}  \right)^2\\
	&\le \left( \sum_{l, \beta } | \dell f^\beta | ^2 \right)
			\left(  \sum_{i,k ,\alpha }|\deli \delk f^\alpha |^2 \right)\\
	&=\tromx{\fomy} \guklb \guijb (\deli \delk f^\alpha ) \overline{(\delj \dell f^\beta)}.
\end{align*}
Here, we used the Cauchy-Schwarz inequalities.
\end{proof}

The next proposition is the so-called  ``Jeffres' trick''.

\begin{proposition}[{\citep[Section 4]{Jeffres00}}]\label{JeffresTrick}
Let $X$ be a compact \kahler \ manifold, $D$ be a smooth divisor, and $\beta $ be a real number satisfying $0<\beta <1$. 
Let $s \in H^0(X, \mathcal{O}_X(D))$ be a holomorphic section of the line bundle $\mathcal{O}_X(D)$ whose zero divisor is $D$, and $h$ is a smooth Hermitian metric. 
Then, for any function $u \in C^{,\alpha, \beta}$ and $\varepsilon >0$, every maximum point of the function
\begin{align*}
	u_\varepsilon \deq u + \varepsilon |s|_h ^{2\gamma }
\end{align*}
on $X$ belongs to $X \setminus D$ if $0<2\gamma <\alpha \beta$.
\end{proposition}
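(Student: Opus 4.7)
The plan is to argue by contradiction: assume that the maximum of $u_{\varepsilon}$ on $X$ is attained at some point $p \in D$, and then exhibit a nearby point where $u_{\varepsilon}$ is strictly larger. Since $u_{\varepsilon}$ and $u$ agree along $D$ (because $|s|_h=0$ there), the whole game is to show that the gain $\varepsilon |s|_h^{2\gamma}$ obtained by moving off $D$ in the normal direction beats the possible loss of $u$, and this is exactly where the numerical condition $2\gamma<\alpha\beta$ enters.

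First I would localize. Pick a holomorphic chart $(U,(z^1,\dots,z^n))$ around $p$ with $D\cap U=\{z^1=0\}$ and $p=(0,z_0^2,\dots,z_0^n)$, and a local trivialization of $\mathcal{O}_X(D)$ in which $s=z^1\cdot e$ for a local frame $e$. Then
\begin{align*}
	|s|_h^{2\gamma}=|z^1|^{2\gamma}\,|e|_h^{2\gamma},
\end{align*}
with $|e|_h^{2\gamma}$ smooth and strictly positive near $p$, so on a possibly smaller neighborhood one has $|s|_h^{2\gamma}\ge c_0 |z^1|^{2\gamma}$ for some $c_0>0$.

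Next I would use the regularity hypothesis on $u$. By Definition \refs{Donaldson}, $u\in C^{,\alpha,\beta}$ means that $u$ is $\alpha$-H\"older on $U$ with respect to the conical distance $d_\beta$; hence there is a constant $K>0$ with
\begin{align*}
	|u(z)-u(p)|\le K\, d_\beta(z,p)^{\alpha}
\end{align*}
for $z$ near $p$. I would then test along the normal ray $z(t)\deq (t,z_0^2,\dots,z_0^n)$ for small $t>0$, on which $d_\beta(z(t),p)=|t|^{\beta}$. Combining the two estimates,
\begin{align*}
	u_{\varepsilon}(z(t))-u_{\varepsilon}(p)
	\ge -K|t|^{\alpha\beta}+\varepsilon c_0 |t|^{2\gamma}.
\end{align*}
Since $2\gamma<\alpha\beta$, the positive term dominates as $t\to 0^{+}$, so $u_{\varepsilon}(z(t))>u_{\varepsilon}(p)$ for all sufficiently small $t>0$, contradicting the assumption that $p$ is a maximum of $u_{\varepsilon}$. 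Therefore every maximum point must lie in $X\setminus D$.

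The only subtle point is making sure that the H\"older condition with respect to $d_\beta$ really translates into the sharp exponent $\alpha\beta$ along the $z^1$-direction; once one observes that a unit step in $z^1$ costs only $|z^1|^{\beta}$ in $d_\beta$-distance, the asymmetry between the loss $|t|^{\alpha\beta}$ and the gain $|t|^{2\gamma}$ follows and dictates exactly the threshold $2\gamma<\alpha\beta$. Everything else is a routine local computation, and this is why the regularity assumption (iii) in Definition \refs{ConeDef}, implicitly imposed via the $C^{,\alpha,\beta}$ class, is essential for the argument to go through.
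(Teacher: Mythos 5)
Your proposal is correct and is essentially the paper's own argument: both localize at a hypothetical maximum point on $D$, restrict to the normal $z^1$-direction where $d_\beta$ scales like $|z^1|^\beta$, and play the gain $\varepsilon|s|_h^{2\gamma}\gtrsim |z^1|^{2\gamma}$ against the H\"older loss $|z^1|^{\alpha\beta}$, using $2\gamma<\alpha\beta$. The only cosmetic difference is that you phrase the contradiction as violating maximality while the paper phrases it as the H\"older quotient of $u$ blowing up; these are the same inequality read in opposite directions.
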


\begin{proof}
We assume that $u_\delta $ takes maximum at $x_0 \in D$. 
Let $(U, (z^1, \dots, z^n))$ be a holomorphic chart centered at $x_0$ satisfying $D\cap U =\{ z^1 =0 \}$. 
By the definition of  $x_0$, for any $x = (z, 0,\dots, 0) \in U$, we have
\begin{align*}
	\dfrac{|u(x) - u(x_0)|}{d_\beta (x,x_0)^\alpha }
	=\dfrac{|u(x) - u(x_0)|}{|z|^{\alpha \beta}}
	\ge \dfrac{\varepsilon |s|_h ^{2\gamma } (x) }{|z|^{\alpha \beta }}
	\ge \dfrac{\varepsilon }{C}\dfrac{|z|^{2\gamma } }{|z|^{\alpha \beta }}.
\end{align*}
Since $0<2\gamma <\alpha \beta$, the right hand side goes to $\infty $ as $z\rightarrow 0$.
This contradicts with the definition of $ C^{,\alpha, \beta}$.
\end{proof}

Theorem \refs{VolThm} and Theorem \refs{TraceThm} can be shown in a smilar manner. We only prove Theorem \refs{VolThm} here.

\begin{proof}[Proof of Theorem \refs{VolThm} (a)] 
Since $f$ can be  represented as $(w^1, \dots, w^n) = ((z^1)^k, f_2(z), \dots,$ $f_n(z))$ such that $D=\{z^1=0 \}$ and $E=\{w^1=0\}$, the direct computation gives that $f$ is locally \holder \ continuous with respect to $d_\alpha $ and $d_\beta $ if $\alpha \le k\beta$. 
Combining with (\refs{vollocal}) and the definition of the cone metrics, $v\deq \fomy^n/\omx^n$ is a $C^{, \sigma , \beta }$ function for some $0<\sigma <1$.  
By Proposition \refs{JeffresTrick}, all maximum points of $v_\delta \deq v + \varepsilon |s|_h^{2\gamma }$ belong to $X \setminus D$ where $\gamma$ is sufficiently small.
Since $v_\varepsilon$ is smooth on $X\setminus D$, we can apply the maximum principle argument to $v_\varepsilon$. 
The direct computation show that
\begin{align*}
	\ddb |s|_h^{2\gamma } 
		&= \ddb e^{\gamma \log |s|_h^2}
		   =  |s|_h^{2\gamma } 
			   (
				   \gamma \ddb \log |s|_h^2 
				   + \gamma^2 \ii \partial \log |s|_h^2 \wedge \overline{\partial} \log |s|_h^2 
			   ) \\
		&\ge - \gamma |s|_h^{2\gamma } \ii R_h.
\end{align*}
Therefore, there exists a constant $C >0 $ (which is independent of $\varepsilon$) satisfying
\begin{align*}
	\lapomx |s|_h^{2\gamma } \ge -  \, C.
\end{align*}

Let $x_0 \in X\setminus D$ be a maximum point of $v_\varepsilon$.  At this point, by Proposition \refs{LapProp} (a), we have
\begin{align*}
	0 \ge \lapomx v_\varepsilon = \lapomx v + \varepsilon \lapomx |s|_h^{2\gamma }  \ge  v ( nB  v^{1/n} - A  ) -\varepsilon C.
\end{align*}
Simple calculus show that the function $t \mapsto t^n(nBt-A) -\varepsilon C $ takes non-positive values exactly on some bounded interval $[0, T_\varepsilon]$ and $T_\varepsilon \rightarrow A/(nB)$ as $\varepsilon \rightarrow 0$. 
It follows that
\begin{align*}
	v_\varepsilon(x_0) = v(x_0) + \varepsilon |s|_h^{2\gamma }(x_0)  \le T_\varepsilon^{n} + \varepsilon \sup_X |s|_h^{2\gamma }.
\end{align*}
Since the right hand side does not depend on $x_0$ and $x_0$ is any maximum point of $v_\varepsilon$, this inequality holds on whole $X$. 
Therefore, we have the following inequality 
\begin{align*}
	v= v_\varepsilon - \varepsilon |s|_h^{2\gamma }  \le v_\varepsilon \le T_\varepsilon^{n} + \varepsilon \sup_X |s|_h^{2\gamma }
\end{align*}
on $X$. By taking $\varepsilon\rightarrow 0$, we obtain $v\le (A/(nB))^{n}$.
\end{proof}

\begin{proof}[Proof of Theorem \refs{VolThm} (b)]
 By definition of the cone metric, we can easily see that for any $\varepsilon>0$, 
\begin{align*}
	\ve 
		\deq |s|_h^{2(\ell+ \varepsilon)}  v = |s|_h^{2(\ell+ \varepsilon)}   \dfrac{\fomy^n}{\omx^n}
\end{align*}
tends to $0$ as $x$ approaches to $D$, where $\ell \deq \alpha - k \beta>0$. 
Then, combining the Laplacian estimate in Proposition \refs{LapProp} (a), we have 
\begin{align*}
\lapomx \log \ve 
	&= -  (\ell +\varepsilon ) \tromx{ \ii R_h } + \lapomx \log v  \\
	&\ge -(\ell +\varepsilon) C  - A  + nB  v^{1/n},  \\
\lapomx \ve 
	&\ge \ve ( -(\ell+\varepsilon) C  - A  + nB v^{1/n} ) .
\end{align*}
If $x_0 \in X$ is a maximum of $\ve $, we can assume that $x_0 \in X\setminus D$. At this point, by applying the maximum principle, we have 
\begin{align*}
	v (x_0)
		&\le \left( \dfrac{A+(\ell+\varepsilon)C }{nB} \right)^n.
\end{align*}
Therefore, we get
\begin{align*}
\ve (x_0)
		& \le  |s|_h^{\ell+ \varepsilon}(x_0) \left( \dfrac{A+(\ell+\varepsilon)C }{nB} \right)^n 
		\le \left( \dfrac{A+(l+\varepsilon)C }{nB} \right)^n.
\end{align*}
Since the right hand side does not depend on $x_0$, this inequality holds on $X$. Taking $\varepsilon\rightarrow 0$, we obtain
\begin{align*}
	|s|_h^{2\ell} \dfrac{\fomy^n}{\omx^n}\le \left( \dfrac{A+\ell C }{nB} \right)^n.
\end{align*}
\end{proof}

\nocite{*} 
\bibliographystyle{amsalphaurlmod}
{\footnotesize
\bibliography{reference}
}
\end{document}